\numberwithin{equation}{section}
\theoremstyle{plain}
\newtheorem{theoremA}{Theorem A(Mertens)}
\newtheorem{theoremB}{Theorem B(Mertens)}
\newtheorem{theoremC}{Theorem C(Serre)}
\newtheorem{density}{Density Hypothesis(DH)}
\newtheorem{theorem}{Theorem}
\begin{document}

\title{Variations on theorems of Mertens}

\author{Nobushige Kurokawa\footnote{Department of Mathematics, Tokyo Institute of Technology} \and Hidekazu Tanaka\footnote{6-15-11-202 Otsuka, Bunkyo-ku, Tokyo}}

\date{September 16, 2022}



\maketitle

\begin{abstract}
We present variations on theorems of Mertens as special cases of Density Hypothesis. Moreover, we study a Serre's estimate concerning Lang-Weil estimate.
\end{abstract}

\section{Introduction} In 1874 Mertens \cite{M} proved the following theorems:
\begin{theoremA} \[
\prod_{p \leq t}\biggl(1-\frac{1}{p}\biggl) \sim e^{-\gamma} (\log t)^{-1}
\]
as $t \to \infty$, where $p$ runs over prime numbers.
\end{theoremA}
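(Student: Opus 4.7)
The plan is to take logarithms, invoke Mertens' second theorem $\sum_{p \leq t} 1/p = \log\log t + M + o(1)$ (with $M$ the Meissel--Mertens constant) to extract the $(\log t)^{-1}$ decay, and then to identify the resulting multiplicative constant with $e^{-\gamma}$ by matching against the Laurent expansion of $\zeta(s)$ at $s = 1$.

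First I would take logarithms and Taylor expand each factor:
\[
\log \prod_{p \leq t}\biggl(1 - \frac{1}{p}\biggr) = -\sum_{p \leq t}\frac{1}{p} - \sum_{p \leq t}\sum_{k \geq 2}\frac{1}{k p^{k}}.
\]
The inner tail is dominated by the convergent series $\sum_p 1/(p(p-1))$, so extending its range to all primes introduces only an $o(1)$ error; writing
\[
C := \sum_p \sum_{k \geq 2}\frac{1}{k p^{k}},
\]
and applying Mertens' second theorem yields
\[
\prod_{p \leq t}\biggl(1 - \frac{1}{p}\biggr) = e^{-M - C}\,(\log t)^{-1}\bigl(1 + o(1)\bigr).
\]
This already gives the claimed $(\log t)^{-1}$ decay with an explicit but as-yet-unidentified constant $e^{-M - C}$.

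The main obstacle is to prove $M + C = \gamma$. For this I would compare two asymptotic expressions for $\sum_p p^{-s}$ as $s \to 1^{+}$. On one side, the Laurent expansion $\zeta(s) = (s-1)^{-1} + \gamma + O(s-1)$ gives $\log \zeta(s) + \log(s-1) \to 0$; combining with the Euler product
\[
\log \zeta(s) = \sum_p \frac{1}{p^{s}} + \sum_p \sum_{k \geq 2}\frac{1}{k p^{ks}}
\]
and the continuity of the double sum at $s = 1$ (with limit $C$) yields $\sum_p p^{-s} + \log(s-1) \to -C$. On the other side, Abel summation applied to Mertens' second theorem, together with the classical integral $\int_0^\infty (\log y)\, e^{-y}\, dy = -\gamma$, produces $\sum_p p^{-s} + \log(s-1) \to M - \gamma$. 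Equating the two limits forces $M + C = \gamma$, which is the required identification. The Abelian passage between the partial-sum and Dirichlet-series formulations is the technical heart of the argument.
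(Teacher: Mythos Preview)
Your argument is the standard derivation of Mertens' third theorem and is essentially correct: the Abel-summation computation you outline does give $\sum_{p} p^{-s} + \log(s-1) \to M - \gamma$ (via the substitution $u = (s-1)\log x$ and the integral $\int_0^\infty e^{-u}\log u\,du = -\gamma$), and matching this against the Euler-product expansion yields $M + C = \gamma$ as claimed.

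However, there is nothing to compare against in the paper. Theorem~A is not proved there; it is quoted as Mertens' 1874 result and then \emph{used} as a black box in the proofs of Theorems~1, 5, 6, and 7. The paper's contribution is to interpret Theorems~A and~B as instances of the Density Hypothesis for $X = \mathrm{GL}(1)$ and $X = \{x^2 + y^2 = 1\}$, and to verify DH for further varieties by reducing to Theorem~A. So your proposal supplies a proof the paper deliberately omits, rather than an alternative to one it contains.
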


\begin{theoremB} \[
\prod_{p:odd \; prime} \biggl(1-\frac{(-1)^{\frac{p-1}{2}}}{p}\biggl) = \frac{4}{\pi}.
\]
\end{theoremB}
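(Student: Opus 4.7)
The plan is to recognize $(-1)^{(p-1)/2}$ as the value at $p$ of the unique non-trivial Dirichlet character $\chi$ modulo $4$ (so $\chi(n)=1$ if $n\equiv 1 \pmod 4$, $\chi(n)=-1$ if $n \equiv 3 \pmod 4$, and $\chi(n)=0$ otherwise). With this identification, the infinite product in question should be the reciprocal of $L(1,\chi)$, where $L(s,\chi)=\sum_{n=1}^\infty \chi(n)/n^s$, via the Euler product
\[
L(s,\chi) = \prod_p \left(1-\frac{\chi(p)}{p^s}\right)^{-1} = \prod_{p \text{ odd}} \left(1-\frac{\chi(p)}{p^s}\right)^{-1},
\]
the Euler factor at $p=2$ being trivial since $\chi(2)=0$.

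The delicate step is justifying this evaluation at $s=1$, where the Euler product is only conditionally convergent. For $\operatorname{Re}(s)>1$ the Euler product converges absolutely and equals $L(s,\chi)$. Taking logarithms of the partial product at $s=1$,
\[
\log\prod_{p\leq t}\left(1-\frac{\chi(p)}{p}\right)^{-1} = \sum_{p\leq t}\frac{\chi(p)}{p} + \sum_{k\geq 2}\frac{1}{k}\sum_{p}\frac{\chi(p)^k}{p^k} + o(1),
\]
and the double sum over $k\geq 2$ converges absolutely. Convergence of the remaining sum $\sum_p \chi(p)/p$ follows from partial summation together with the prime number theorem for arithmetic progressions modulo $4$ (equivalently, from holomorphy and non-vanishing of $L(s,\chi)$ at $s=1$). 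An Abelian argument letting $s\downarrow 1$ then identifies the limit of the partial Euler product with $1/L(1,\chi)$.

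Once the Euler product identity is established at $s=1$, it remains to evaluate $L(1,\chi)$. This is Leibniz's classical formula
\[
L(1,\chi) = 1 - \frac{1}{3} + \frac{1}{5} - \frac{1}{7} + \cdots = \frac{\pi}{4},
\]
which I would derive from Abel's theorem applied to the Maclaurin expansion $\arctan x = \sum_{n\geq 0}(-1)^n x^{2n+1}/(2n+1)$ at $x=1$. Combining the two ingredients yields $4/\pi$, as claimed. The main obstacle throughout is the conditional (non-absolute) convergence at the boundary point $s=1$; everything else is essentially a bookkeeping exercise.
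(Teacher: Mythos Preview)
The paper does not give its own proof of Theorem~B. It is stated in the introduction as a classical result of Mertens (1874), cited from \cite{M}, and used as input: the paper's only contribution in this direction is Theorem~0(2), which identifies Theorem~B with the Density Hypothesis for the circle $X=\{x^2+y^2=1\}$ by computing $|X(\mathbb{F}_p)|=p-(-1)^{(p-1)/2}$ for odd $p$. So there is no proof in the paper to compare your proposal against.

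Your outline is the standard modern argument and is correct: identify $(-1)^{(p-1)/2}$ with the nontrivial character $\chi$ modulo $4$, interpret the product as the boundary value of the Euler product for $L(s,\chi)^{-1}$, justify passage to $s=1$ via convergence of $\sum_p \chi(p)/p$, and invoke Leibniz's formula $L(1,\chi)=\pi/4$. One small remark: invoking the prime number theorem for arithmetic progressions is heavier machinery than is needed (and anachronistic relative to Mertens, whose 1874 paper predates the prime number theorem). The convergence of $\sum_p \chi(p)/p$ and the identification of its value with $\log L(1,\chi)$ minus the absolutely convergent higher-power terms can be obtained by more elementary Mertens-type estimates or directly by Abel summation using only the non-vanishing of $L(1,\chi)$. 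But this is a matter of taste, not a gap.
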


In this paper we present an interpretation to these theorems as special cases of the following expectation:

\begin{density} Let $X$ be an algebraic variety over the rational number field ${\mathbb Q}$. Define the density function for $t>0$ as
\[
||X||_{t} = \prod_{p \leq t} \frac{|X({\mathbb F}_{p})|}{p^{{\rm dim}(X)}}.
\]
Then there would exist a positive constant $C(X)$ and an integer $r(X)$ satisfying
\[
||X||_{t} \sim C(X) (\log r)^{r(X)} 
\]
as $t \to \infty$.
\end{density}

\begin{theorem} Theorem A and Theorem B are cases
\par (1) $X={\mathbb G}_{m}={\rm GL}(1)$
\\
and
\par (2) $X={\mathcal C}=\{(x,y)|x^2+y^2=1\}$ (circle).
\end{theorem}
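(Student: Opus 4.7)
The strategy is, in each case, to identify the Euler factor $|X(\mathbb{F}_p)|/p^{\dim(X)}$ with the factor appearing in the corresponding Mertens product, and then to invoke the relevant theorem directly.

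\medskip

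\noindent\emph{Case (1): $X = \mathbb{G}_m$.} Here $\dim(\mathbb{G}_m) = 1$ and $\mathbb{G}_m(\mathbb{F}_p) = \mathbb{F}_p^\times$ has order $p - 1$, so the ratio is $(p-1)/p = 1 - 1/p$. Thus $\|\mathbb{G}_m\|_t$ is literally the product in Theorem A, which yields $C(\mathbb{G}_m) = e^{-\gamma}$ and $r(\mathbb{G}_m) = -1$. This step is a one-line matching of definitions.

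\medskip

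\noindent\emph{Case (2): $X = \mathcal{C}$.} The essential computation is the point count $|\mathcal{C}(\mathbb{F}_p)|$. For $p = 2$ a direct check gives $|\mathcal{C}(\mathbb{F}_2)| = 2$, so the $p = 2$ factor equals $1$ and contributes trivially. For odd $p$ I would write
\[
|\mathcal{C}(\mathbb{F}_p)| = \sum_{x \in \mathbb{F}_p} \#\{y : y^2 = 1 - x^2\} = p + \sum_{x \in \mathbb{F}_p} \chi_p(1 - x^2),
\]
where $\chi_p$ denotes the Legendre symbol (with $\chi_p(0) = 0$), and then evaluate the character sum by the substitution $y = x - 1$, reducing it to $\sum_{z \neq 1}\chi_p(z) = -1$ scaled by $\chi_p(-1)$. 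This gives $|\mathcal{C}(\mathbb{F}_p)| = p - \chi_p(-1) = p - (-1)^{(p-1)/2}$, so
\[
\frac{|\mathcal{C}(\mathbb{F}_p)|}{p^{\dim(\mathcal{C})}} = 1 - \frac{(-1)^{(p-1)/2}}{p}.
\]
Hence $\|\mathcal{C}\|_t$ is precisely the partial product in Theorem B; letting $t \to \infty$ yields $C(\mathcal{C}) = 4/\pi$ and $r(\mathcal{C}) = 0$.

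\medskip

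The only non-formal step is the character-sum evaluation in case (2); everything else is bookkeeping against the statements of Theorems A and B. Geometrically, the formula $|\mathcal{C}(\mathbb{F}_p)| = p - \chi_p(-1)$ just records whether the two points at infinity on the projective closure $X^2 + Y^2 = Z^2$ are $\mathbb{F}_p$-rational, which happens exactly when $-1$ is a square modulo $p$, i.e.\ when $p \equiv 1 \pmod 4$; I would mention this as a sanity check but would not need it for the proof itself.
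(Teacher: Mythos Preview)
Your proof is correct and follows the same approach as the paper: in each case you compute $|X(\mathbb{F}_p)|$, match the resulting Euler factor with the product in Theorem~A or Theorem~B, and read off $C(X)$ and $r(X)$. The only difference is that you actually justify the point count $|\mathcal{C}(\mathbb{F}_p)| = p - (-1)^{(p-1)/2}$ via a character-sum computation (and a geometric sanity check), whereas the paper simply asserts the values $p\mp1$ according to $p\bmod 4$ without proof.
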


\begin{proof}[Proof of Theorem 0]
\par (1) Let $X={\mathbb G}_{m}={\rm GL}(1)$. Then we have 
\[
|X({\mathbb F}_{p})|=|{\rm GL}(1,{\mathbb F}_{p})|=p-1.
\] Thus we have
\begin{align*}
||X||_{t} &= ||{\rm GL}(1)||_{t} = \prod_{p \leq t} \frac{|{\rm GL}(1,{\mathbb F}_{p})|}{p}\\
&= \prod_{p \leq t} (1-p^{-1}).
\end{align*}
\par (2) Let $X=\{(x,y)|x^2+y^2=1\}$. Then we have 
\[
|X({\mathbb F}_{p})|=\left\{
\begin{array}{ccc}
2 & \cdots & p=2,\\
p-1 & \cdots & p \equiv 1 \; {\rm mod} \; 4,\\
p+1 & \cdots & p \equiv 3 \; {\rm mod} \; 4.
\end{array}
\right.
\] Thus we have
\begin{align*}
||X||_{t} &= \prod_{p \leq t} \frac{|X({\mathbb F}_{p})|}{p}\\
&\sim \prod_{p:odd \; prime} \frac{p-(-1)^{\frac{p-1}{2}}}{p} \quad (t \to \infty).
\end{align*}

\end{proof}

Hereafter we explain many examples satisfying DH. We remark that DH is quite difficult in general. For example let $X$ be an abelian variety (e.g. elliptic curve), then DH is the original version of BSD \cite{BS} with $r(X)={\rm rank}X({\mathbb Q})$ and it will imply the Riemann hypothesis for the associated $L$-function $L(s,X)$ as indicated by \cite{G} (at least for dim$(X)=1$.) We remark that the Deep Riemann Hypothesis is studied in \cite{KK,KKK}.

\begin{theorem}[${\rm GL}(n)$] 
\begin{align*}
C({\rm GL}(n))&=e^{-\gamma} \prod_{k=2}^{n} \zeta(k)^{-1}.\\
r({\rm GL}(n))&=-1.
\end{align*}
\end{theorem}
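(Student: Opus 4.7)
The plan is to reduce the density $\|\mathrm{GL}(n)\|_t$ to a product of Euler-type factors indexed by $k=1,\dots,n$, and then treat the $k=1$ factor by Mertens' Theorem A while the factors for $k\ge 2$ converge absolutely to the reciprocals of $\zeta(k)$.

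First I would compute the point count in the standard way. Each invertible matrix over $\mathbb{F}_p$ is built column by column avoiding the span of the previous columns, giving
\[
|\mathrm{GL}(n,\mathbb{F}_p)|=\prod_{j=0}^{n-1}(p^n-p^j)=p^{n(n-1)/2}\prod_{k=1}^{n}(p^k-1).
\]
Since $\dim \mathrm{GL}(n)=n^2$ and $n^2-n(n-1)/2=n(n+1)/2=\sum_{k=1}^n k$, I get the clean formula
\[
\frac{|\mathrm{GL}(n,\mathbb{F}_p)|}{p^{\dim \mathrm{GL}(n)}}=\prod_{k=1}^{n}\bigl(1-p^{-k}\bigr).
\]
Substituting into the definition of $\|\cdot\|_t$ and swapping the two finite products,
\[
\|\mathrm{GL}(n)\|_t=\prod_{k=1}^{n}\prod_{p\le t}\bigl(1-p^{-k}\bigr).
\]

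Next I would isolate the $k=1$ factor and apply Theorem A (Mertens), which gives
\[
\prod_{p\le t}(1-p^{-1})\sim e^{-\gamma}(\log t)^{-1}\qquad(t\to\infty).
\]
For each $k\ge 2$, the series $\sum_p p^{-k}$ converges, so the partial Euler product converges to a nonzero limit,
\[
\prod_{p\le t}(1-p^{-k})\longrightarrow\prod_{p}(1-p^{-k})=\zeta(k)^{-1},
\]
as $t\to\infty$. Because each of these $n-1$ convergent factors has a finite nonzero limit, I can multiply the asymptotics together to obtain
\[
\|\mathrm{GL}(n)\|_t\sim e^{-\gamma}(\log t)^{-1}\prod_{k=2}^{n}\zeta(k)^{-1},
\]
which is exactly the claim, with $C(\mathrm{GL}(n))=e^{-\gamma}\prod_{k=2}^{n}\zeta(k)^{-1}$ and $r(\mathrm{GL}(n))=-1$.

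The only nontrivial ingredient is Theorem A itself, which is assumed; everything else is a bookkeeping calculation. The step that needs a small justification is the legitimacy of multiplying an asymptotic equivalence by several convergent factors, but this is immediate from $a_t\sim b_t$ and $c_t^{(k)}\to c^{(k)}\ne 0$ implying $a_t\prod_k c_t^{(k)}\sim b_t\prod_k c^{(k)}$. Thus I anticipate no real obstacle, only the initial clean rewriting of the Jordan-type formula for $|\mathrm{GL}(n,\mathbb{F}_p)|$ in a form that absorbs the $p^{\dim X}$ normalization.
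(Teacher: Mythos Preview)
Your proof is correct and follows essentially the same route as the paper: write $|\mathrm{GL}(n,\mathbb{F}_p)|/p^{n^2}=\prod_{k=1}^n(1-p^{-k})$, apply Mertens' Theorem~A to the $k=1$ factor, and let the $k\ge 2$ factors converge to $\zeta(k)^{-1}$. Your version simply adds a bit more detail (the column-counting derivation and the remark on multiplying an asymptotic equivalence by convergent factors), but the argument is the same.
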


\begin{theorem}[${\rm SL}(n)$]
\begin{align*}
C({\rm SL}(n))&= \prod_{k=2}^{n} \zeta(k)^{-1}.\\
r({\rm SL}(n))&=0.
\end{align*}
\end{theorem}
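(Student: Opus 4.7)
The plan is to compute $||{\rm SL}(n)||_{t}$ explicitly as a product of Euler factors that individually converge, and observe that no $\log t$ factor arises.

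First I would recall that ${\rm SL}(n,\mathbb{F}_p)$ is the kernel of the surjective determinant map ${\rm GL}(n,\mathbb{F}_p)\to\mathbb{F}_p^{\times}$, so
\[
|{\rm SL}(n,\mathbb{F}_p)| \;=\; \frac{|{\rm GL}(n,\mathbb{F}_p)|}{p-1} \;=\; p^{n(n-1)/2}\prod_{k=2}^{n}(p^{k}-1).
\]
Using $\dim {\rm SL}(n) = n^{2}-1$ together with the identity $\frac{n(n-1)}{2}+\sum_{k=2}^{n}k = n^{2}-1$, the ratio simplifies to
\[
\frac{|{\rm SL}(n,\mathbb{F}_p)|}{p^{n^{2}-1}} \;=\; \prod_{k=2}^{n}(1-p^{-k}),
\]
which I would verify as a short direct computation.

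Next I would substitute into the definition of $||\cdot||_{t}$ and swap the order of the two products:
\[
||{\rm SL}(n)||_{t} \;=\; \prod_{p\leq t}\prod_{k=2}^{n}(1-p^{-k}) \;=\; \prod_{k=2}^{n}\prod_{p\leq t}(1-p^{-k}).
\]
For each fixed $k\geq 2$, the infinite product $\prod_{p}(1-p^{-k})$ converges absolutely (since $\sum_{p}p^{-k}\leq \zeta(k)-1<\infty$), and by Euler's product formula it equals $\zeta(k)^{-1}$. Letting $t\to\infty$ gives
\[
||{\rm SL}(n)||_{t} \;\longrightarrow\; \prod_{k=2}^{n}\zeta(k)^{-1},
\]
so $C({\rm SL}(n))=\prod_{k=2}^{n}\zeta(k)^{-1}$ and $r({\rm SL}(n))=0$, as required.

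There is no real obstacle in this case: because every factor has exponent $k\geq 2$, we stay inside the range of absolute convergence and no Mertens-type input is required. The only point demanding care is the exponent bookkeeping that matches the powers of $p$ in $|{\rm SL}(n,\mathbb{F}_p)|$ against $p^{\dim {\rm SL}(n)}$. This is in contrast with the ${\rm GL}(n)$ case, where the extra factor $\prod_{p\leq t}(1-p^{-1})$ is exactly the divergent product controlled by Theorem A and produces the $(\log t)^{-1}$ responsible for $r({\rm GL}(n))=-1$.
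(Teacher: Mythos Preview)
Your proof is correct and follows essentially the same route as the paper's: compute $|{\rm SL}(n,\mathbb{F}_p)|=|{\rm GL}(n,\mathbb{F}_p)|/(p-1)$, normalize by $p^{n^2-1}$ to obtain $\prod_{k=2}^{n}(1-p^{-k})$, and then recognize the product over $p\leq t$ as the partial Euler product for $\prod_{k=2}^{n}\zeta(k)^{-1}$. Your additional remarks on absolute convergence and the contrast with the ${\rm GL}(n)$ case are accurate but are elaborations rather than a different argument.
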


\begin{theorem}[${\rm Sp}(n)$]
\begin{align*}
C({\rm Sp}(n))&=\prod_{k=1}^{n} \zeta(2k)^{-1}.\\
r({\rm Sp}(n))&=0.
\end{align*}
\end{theorem}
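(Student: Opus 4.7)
The plan is to reduce $\|\mathrm{Sp}(n)\|_{t}$ to a product of local Euler factors $\prod_{p\leq t}(1-p^{-2k})$ for $k=1,\dots ,n$, each of which converges to $\zeta(2k)^{-1}$ as $t\to\infty$ since $2k\geq 2$.

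First I would recall the two standard inputs about the symplectic group $\mathrm{Sp}(n):=\mathrm{Sp}_{2n}$: the dimension formula
\[
\dim \mathrm{Sp}(n) = n(2n+1),
\]
and the order formula over a finite field,
\[
|\mathrm{Sp}(n,{\mathbb F}_p)| = p^{n^{2}}\prod_{k=1}^{n}(p^{2k}-1).
\]
Both are classical; the order formula follows by counting ordered symplectic bases, and would be cited without reproof.

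Next I would divide to obtain the local density. Using $\prod_{k=1}^{n}(p^{2k}-1) = p^{n(n+1)}\prod_{k=1}^{n}(1-p^{-2k})$ together with $n^{2}+n(n+1)=2n^{2}+n=n(2n+1)$, the exponents of $p$ cancel exactly, giving the clean identity
\[
\frac{|\mathrm{Sp}(n,{\mathbb F}_{p})|}{p^{\dim \mathrm{Sp}(n)}} \;=\; \prod_{k=1}^{n}\bigl(1-p^{-2k}\bigr).
\]
Taking the product over $p\leq t$ and swapping the finite products yields
\[
\|\mathrm{Sp}(n)\|_{t} \;=\; \prod_{k=1}^{n}\prod_{p\leq t}\bigl(1-p^{-2k}\bigr).
\]

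Finally I would let $t\to\infty$. Because $2k\geq 2>1$ for every $k$ in the range, the Euler product for $\zeta(2k)$ converges absolutely, so $\prod_{p\leq t}(1-p^{-2k})\to \zeta(2k)^{-1}$. Therefore
\[
\|\mathrm{Sp}(n)\|_{t} \;\sim\; \prod_{k=1}^{n}\zeta(2k)^{-1} \qquad (t\to\infty),
\]
which reads off $C(\mathrm{Sp}(n))=\prod_{k=1}^{n}\zeta(2k)^{-1}$ and $r(\mathrm{Sp}(n))=0$. There is no real obstacle here: unlike the $\mathrm{GL}(n)$ case, where the $k=1$ factor $\prod_p(1-p^{-1})$ diverges and forces one to invoke Mertens' Theorem A (producing the $e^{-\gamma}(\log t)^{-1}$ and hence $r=-1$), in the symplectic case every exponent $2k$ is at least $2$, so the convergence is elementary and $r=0$ drops out automatically. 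The only mildly delicate point is the dimension/exponent bookkeeping, which is arranged so that the factor $p^{\dim \mathrm{Sp}(n)}$ precisely cancels $p^{n^{2}+n(n+1)}$.
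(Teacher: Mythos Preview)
Your argument is correct and is essentially identical to the paper's proof: both compute $|\mathrm{Sp}(n,\mathbb F_p)|/p^{n(2n+1)}=\prod_{k=1}^{n}(1-p^{-2k})$ and then pass to the limit using the absolute convergence of $\zeta(2k)$ for $2k\ge 2$. The only cosmetic difference is that you start from the order formula in the form $p^{n^2}\prod_k(p^{2k}-1)$ and do the exponent bookkeeping explicitly, whereas the paper quotes it already normalized as $p^{n(2n+1)}\prod_k(1-p^{-2k})$.
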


\begin{theorem}[${\mathbb A}^{n}$]
\begin{align*}
C({\mathbb A}^{n})&=1.\\
r({\mathbb A}^{n})&=0.
\end{align*}
\end{theorem}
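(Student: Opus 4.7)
The plan is to observe that this case is essentially definitional: affine $n$-space has exactly $p^n$ rational points over $\mathbb{F}_p$, and its dimension is $n$, so every local factor in the density product collapses to $1$. Thus $||\mathbb{A}^n||_t$ is identically equal to $1$ for all $t > 0$, which immediately gives the claimed asymptotics with $C(\mathbb{A}^n) = 1$ and $r(\mathbb{A}^n) = 0$.

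More concretely, the first step would be to record the point count
\[
|\mathbb{A}^n(\mathbb{F}_p)| = |\mathbb{F}_p^n| = p^n,
\]
which holds for every prime $p$ (no exceptional primes appear). The second step is simply to substitute into the definition of the density function:
\[
||\mathbb{A}^n||_t = \prod_{p \leq t} \frac{|\mathbb{A}^n(\mathbb{F}_p)|}{p^{\dim(\mathbb{A}^n)}} = \prod_{p \leq t} \frac{p^n}{p^n} = 1.
\]
Reading off the asymptotic, we get $||\mathbb{A}^n||_t \sim 1 \cdot (\log t)^0$, which proves both assertions.

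There is no genuine obstacle here; the whole statement functions as a consistency check for the Density Hypothesis, confirming that the "trivial" variety behaves exactly as expected, with no logarithmic growth and constant $1$. One might also remark that this case is compatible with the general philosophy: $\mathbb{A}^n$ has no nontrivial $L$-factor and no arithmetic content beyond its dimension, so neither a zeta value nor an Euler constant appears in $C(\mathbb{A}^n)$, in contrast to the $\mathrm{GL}(n)$, $\mathrm{SL}(n)$, and $\mathrm{Sp}(n)$ cases stated just above.
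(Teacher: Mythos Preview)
Your proof is correct and follows exactly the same approach as the paper: note that $|\mathbb{A}^{n}(\mathbb{F}_{p})|=p^{n}$, so every factor in $\|\mathbb{A}^{n}\|_{t}$ equals $1$, giving the asymptotic $\|\mathbb{A}^{n}\|_{t}\sim 1$ with $C(\mathbb{A}^{n})=1$ and $r(\mathbb{A}^{n})=0$. The only addition on your side is the philosophical remark at the end, which is sound but not part of the formal argument.
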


\begin{theorem}[${\mathbb P}^{n}$]
\begin{align*}
C({\mathbb P}^{n})&=e^{\gamma} \zeta(n+1)^{-1}.\\
r({\mathbb P}^{n})&=1.
\end{align*}
\end{theorem}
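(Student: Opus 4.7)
The plan is to reduce the computation of $\|\mathbb P^n\|_t$ to two well-understood pieces: a Mertens-type product (which is already being invoked in Theorem A of the introduction) and an absolutely convergent Euler product for $\zeta(n+1)$. This is the same strategy that the paper is (implicitly) using for $\mathrm{GL}(n)$ and $\mathrm{SL}(n)$.

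First I would compute the point count, which is standard:
\[
|\mathbb P^n(\mathbb F_p)| = 1+p+p^2+\cdots+p^n = \frac{p^{n+1}-1}{p-1}.
\]
Since $\dim\mathbb P^n = n$, dividing by $p^n$ and rearranging gives the key identity
\[
\frac{|\mathbb P^n(\mathbb F_p)|}{p^n} = \frac{1-p^{-(n+1)}}{1-p^{-1}}.
\]
This factorization is the whole point: it separates the $p\to\infty$ ``convergent'' behavior $(1-p^{-(n+1)})$ from the ``divergent'' behavior $(1-p^{-1})^{-1}$ that produces the logarithm.

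Next I would take the product over $p \le t$ and split it as
\[
\|\mathbb P^n\|_t = \Bigl(\prod_{p\le t}(1-p^{-(n+1)})\Bigr)\cdot\Bigl(\prod_{p\le t}(1-p^{-1})\Bigr)^{-1}.
\]
For the first factor I would invoke the absolute convergence of the Euler product at $s=n+1\ge 2$: as $t\to\infty$ it tends to $\prod_p(1-p^{-(n+1)}) = \zeta(n+1)^{-1}$. For the second factor I would invert Theorem A of Mertens, which gives
\[
\Bigl(\prod_{p\le t}(1-p^{-1})\Bigr)^{-1}\sim e^{\gamma}\log t \qquad (t\to\infty).
\]
Multiplying these two asymptotics gives
\[
\|\mathbb P^n\|_t \sim e^{\gamma}\zeta(n+1)^{-1}\log t,
\]
which identifies $C(\mathbb P^n)=e^{\gamma}\zeta(n+1)^{-1}$ and $r(\mathbb P^n)=1$, as claimed.

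No step here is a real obstacle; the only thing to be careful about is that splitting an asymptotic product into two factors is legitimate because one factor genuinely converges (not merely is asymptotic to something), so the asymptotic from Mertens passes through the multiplication unchanged. The case $n=1$ also serves as a sanity check, since $\mathbb P^1$ has $p+1$ points and one recovers the familiar $e^{\gamma}\zeta(2)^{-1}\log t = (6e^{\gamma}/\pi^2)\log t$.
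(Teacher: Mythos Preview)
Your proof is correct and follows essentially the same route as the paper: compute $|\mathbb P^n(\mathbb F_p)|=(p^{n+1}-1)/(p-1)$, rewrite the local factor as $(1-p^{-(n+1)})/(1-p^{-1})$, and then combine the convergent Euler product for $\zeta(n+1)^{-1}$ with Mertens' Theorem~A for the $(1-p^{-1})^{-1}$ part. The only difference is that you spell out the justification for splitting the product and add the $n=1$ sanity check, neither of which appears in the paper's terse version.
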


\begin{theorem}[${\rm Gr}(n,m):n>m>1$]
\begin{align*}
C({\rm Gr}(n,m))&=e^{\gamma} \frac{\prod_{k=2}^{m} \zeta(k)}{\prod_{k=n-m+1}^{n} \zeta(k)}.\\
r({\rm Gr}(n,m))&=1.
\end{align*}
\end{theorem}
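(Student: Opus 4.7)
My plan is to reduce the claim to a direct Euler-product computation, using the standard Gaussian binomial formula for the number of $\mathbb{F}_p$-points and then applying Mertens' Theorem A to a single surviving $(1-p^{-1})^{-1}$ factor.

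First, I would invoke the classical counting
\[
|\mathrm{Gr}(n,m)(\mathbb{F}_p)| = \prod_{k=1}^{m} \frac{p^{n-m+k}-1}{p^k-1} = \frac{\prod_{k=n-m+1}^{n}(p^k-1)}{\prod_{k=1}^{m}(p^k-1)},
\]
together with $\dim \mathrm{Gr}(n,m) = m(n-m)$. Pulling a factor $p^k$ out of each $p^k-1$ and using the identity $\sum_{k=n-m+1}^{n} k - \sum_{k=1}^{m} k = m(n-m)$, the powers of $p$ in numerator and denominator will cancel $p^{m(n-m)}$ exactly, so that
\[
\frac{|\mathrm{Gr}(n,m)(\mathbb{F}_p)|}{p^{m(n-m)}} = \frac{\prod_{k=n-m+1}^{n}(1-p^{-k})}{\prod_{k=1}^{m}(1-p^{-k})}.
\]

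Next I would isolate the $k=1$ factor in the denominator. Because the hypothesis $n>m>1$ forces $n-m+1 \geq 2$ and $m \geq 2$, every remaining exponent $k$ satisfies $k \geq 2$, and for such $k$ the Euler product $\prod_p(1-p^{-k})$ converges absolutely to $\zeta(k)^{-1}$. Thus
\[
||\mathrm{Gr}(n,m)||_t = \prod_{p \leq t}(1-p^{-1})^{-1} \cdot \prod_{p \leq t}\frac{\prod_{k=n-m+1}^{n}(1-p^{-k})}{\prod_{k=2}^{m}(1-p^{-k})},
\]
and Mertens' Theorem A gives $\prod_{p \leq t}(1-p^{-1})^{-1} \sim e^{\gamma}\log t$, while the second product tends to $\prod_{k=n-m+1}^{n}\zeta(k)^{-1}\big/\prod_{k=2}^{m}\zeta(k)^{-1}$. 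Multiplying these asymptotics yields exactly the stated $C(\mathrm{Gr}(n,m))$ and $r(\mathrm{Gr}(n,m))=1$.

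The only step that requires genuine care is the bookkeeping verification that after extracting $p^k$ from each Gaussian-binomial factor, the accumulated power of $p$ matches $p^{m(n-m)}$ on the nose; this is the arithmetic identity above, and once checked, the remainder is standard. I expect no further obstacle because Theorem A already does the analytic heavy lifting, and the hypothesis $n>m>1$ is precisely what ensures that exactly one $(1-p^{-1})$ factor survives, which is why the logarithmic factor appears to the first power and $r=1$.
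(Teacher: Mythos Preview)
Your argument is correct and mirrors the paper's proof almost exactly: the paper also writes $|\mathrm{Gr}(n,m)(\mathbb{F}_p)|$ as the Gaussian binomial, divides by $p^{m(n-m)}$ to obtain $\prod_{p\le t}\frac{(1-p^{-(n-m+1)})\cdots(1-p^{-n})}{(1-p^{-1})\cdots(1-p^{-m})}$, and then applies Theorem~A to the lone $(1-p^{-1})^{-1}$ factor while the remaining Euler factors converge to the stated zeta values. Your write-up is in fact slightly more explicit than the paper's, since you spell out the identity $\sum_{k=n-m+1}^{n}k-\sum_{k=1}^{m}k=m(n-m)$ that justifies the cancellation of the $p$-power.
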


For a monic polynomial $f(x) \in {\mathbb Z}[x]$ we define 
\[
||f||_{t} =\prod_{p \leq t} \frac{f(p)}{p^{{\rm deg}(f)}}
\]
and study the property
\[
||f||_{t} \sim C(f) (\log t)^{r(f)}
\]
as $t \to \infty$. Then Theorems 1-6 are essentially reduced to the case of the cyclotomic polynomial $\Phi_{n}$.

\begin{theorem}
\begin{align*}
C(\Phi_{n})&=e^{- \gamma \mu(n)} \prod_{d|n \atop d>1} \zeta(d)^{-\mu(\frac{n}{d})}.\\
\gamma(\Phi_{n})&=-\mu(n).
\end{align*}
\end{theorem}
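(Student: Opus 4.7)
The plan is to reduce the product $\|\Phi_n\|_t$ to a finite combination of Mertens-type products via the Möbius inversion of the cyclotomic factorization, and then apply Theorem A for the single divergent factor while the remaining factors converge to reciprocal zeta values.

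First, I would start from the Möbius-inverted identity
\[
\Phi_n(x) = \prod_{d \mid n} (x^{d}-1)^{\mu(n/d)},
\]
which follows by logarithmic Möbius inversion from $x^{n}-1 = \prod_{d \mid n} \Phi_{d}(x)$, together with $\deg \Phi_{n} = \varphi(n) = \sum_{d\mid n} d\,\mu(n/d)$. Dividing both sides of the cyclotomic identity by $p^{\varphi(n)}$ and distributing the exponents yields
\[
\frac{\Phi_{n}(p)}{p^{\varphi(n)}} = \prod_{d \mid n} \bigl(1 - p^{-d}\bigr)^{\mu(n/d)},
\]
so interchanging the two finite products gives
\[
\|\Phi_n\|_{t} = \prod_{d \mid n} \Bigl(\prod_{p \leq t} (1 - p^{-d})\Bigr)^{\mu(n/d)}.
\]

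Next I would separate the divisor $d=1$ from the rest. For $d=1$ the inner product is exactly the Mertens product in Theorem A, so
\[
\Bigl(\prod_{p\leq t}(1-p^{-1})\Bigr)^{\mu(n)} \sim e^{-\gamma\mu(n)} (\log t)^{-\mu(n)} \qquad (t\to\infty).
\]
For every divisor $d > 1$ of $n$ the series $\sum_{p} p^{-d}$ converges absolutely, hence the Euler product
\[
\prod_{p\leq t}(1-p^{-d})^{\mu(n/d)} \longrightarrow \zeta(d)^{-\mu(n/d)}
\]
as $t \to \infty$. Multiplying these finitely many limits with the asymptotic from $d=1$ gives
\[
\|\Phi_n\|_{t} \sim e^{-\gamma\mu(n)} \prod_{\substack{d\mid n \\ d>1}} \zeta(d)^{-\mu(n/d)} \cdot (\log t)^{-\mu(n)},
\]
which is precisely the stated expressions for $C(\Phi_n)$ and $r(\Phi_n) = -\mu(n)$.

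There is no real obstacle: the only analytically nontrivial ingredient is Mertens' Theorem A, which is invoked for $d=1$, and the convergence of the Euler factors for $d\geq 2$ is elementary. The one sanity check I would carry out is the edge case $n=1$, where $\mu(1)=1$, $\Phi_1(x)=x-1$, the product over $d\mid n,\ d>1$ is empty, and the formula collapses back to Theorem A, confirming consistency of the normalization.
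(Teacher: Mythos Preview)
Your proof is correct and follows essentially the same route as the paper: both use the M\"obius-inverted cyclotomic identity $\Phi_n(x)=\prod_{d\mid n}(x^d-1)^{\mu(n/d)}$ together with $\varphi(n)=\sum_{d\mid n}d\,\mu(n/d)$ to rewrite $\|\Phi_n\|_t$ as $\prod_{d\mid n}\bigl(\prod_{p\le t}(1-p^{-d})\bigr)^{\mu(n/d)}$, then separate the $d=1$ factor (handled by Theorem~A) from the $d>1$ factors (converging to $\zeta(d)^{-\mu(n/d)}$). Your write-up is in fact a bit more explicit about the justification of each step and adds the $n=1$ sanity check, but the argument is the same.
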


Now we recall a Serre's estimate \cite{S} concerning Lang-Weil estimate \cite{LW}.

\begin{theoremC} Let $X$ be an algebraic variety over the rational number field ${\mathbb Q}$. Then we have
\[
\biggl| |X({\mathbb F}_{p})| - p^{{\rm dim}(X)} \biggl| \leq B p^{{\rm dim}(X)-\frac{1}{2}},
\]
where $B$ is a constant independent of $p$.
\end{theoremC}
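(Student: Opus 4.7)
The plan is to deduce Serre's estimate from the Lang--Weil theorem \cite{LW} via a spreading--out argument. First I would pick a flat separated model $\mathcal{X}\to\mathrm{Spec}(\mathbb{Z}[1/N])$ of $X$ for a suitable positive integer $N$, so that for every $p\nmid N$ the fibre $\mathcal{X}_{p}=\mathcal{X}\otimes\mathbb{F}_{p}$ is an $\mathbb{F}_{p}$-scheme of the same dimension $d=\dim(X)$ as the generic fibre and $|X(\mathbb{F}_{p})|=|\mathcal{X}_{p}(\mathbb{F}_{p})|$. By shrinking $N$ further, I may also arrange that the decomposition of $\mathcal{X}_{p}$ into geometrically irreducible components matches that of $X_{\overline{\mathbb{Q}}}$, so that component counts behave uniformly in $p$.

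Next I would reduce to the case where $X$ is geometrically irreducible: this is the setting in which Theorem~C as stated is valid, since for a disjoint union of top--dimensional components the constant in front of $p^{d}$ would equal the number of such components. Choose a projective embedding, pass to a projective closure $\overline{X}\subset\mathbb{P}^{M}$ of degree $r$, and for each $p\nmid N$ apply the Lang--Weil bound to $\overline{\mathcal{X}}_{p}$:
\[
\bigl| |\overline{\mathcal{X}}_{p}(\mathbb{F}_{p})|-p^{d}\bigr|\leq (r-1)(r-2)\,p^{d-\frac{1}{2}}+A\,p^{d-1},
\]
where $A$ depends only on $M,d,r$. The corrections coming from the boundary $\overline{X}\setminus X$ and from any lower--dimensional components are $O(p^{d-1})$, while the finitely many bad primes $p\mid N$ can be treated by a crude bound (e.g.\ via Noether normalisation one has $|\mathcal{X}_{p}(\mathbb{F}_{p})|\ll p^{d}$). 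All such terms can be absorbed into a single constant $B$.

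The main obstacle is really the Lang--Weil inequality itself, whose $p^{-1/2}$ saving rests on the Riemann hypothesis for curves over finite fields. In a more modern treatment one can bypass Lang--Weil by applying the Grothendieck--Lefschetz trace formula to $H^{*}_{c}(\mathcal{X}_{p,\overline{\mathbb{F}}_{p}},\mathbb{Q}_{\ell})$ together with Deligne's purity theorem: the eigenvalues of Frobenius on $H^{i}_{c}$ have absolute value at most $p^{i/2}$, the top cohomology $H^{2d}_{c}=\mathbb{Q}_{\ell}(-d)$ supplies the main term $p^{d}$, and the remaining terms contribute $O(p^{d-\frac{1}{2}})$, which is exactly Serre's estimate.
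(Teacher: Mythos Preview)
The paper does not prove Theorem~C at all: it is quoted as a result of Serre (with a reference to \cite{S}) and then used as a black box in the discussion of $b_X(p)$. So there is no ``paper's own proof'' against which to compare your attempt.

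That said, your sketch is a standard and essentially correct route to the Lang--Weil/Serre estimate: spread out over $\mathrm{Spec}\,\mathbb{Z}[1/N]$, reduce to a geometrically irreducible projective variety of fixed degree and embedding dimension, apply Lang--Weil uniformly in $p$, and absorb the boundary, lower--dimensional, and bad--prime contributions into the constant $B$. Your alternative via the Grothendieck--Lefschetz trace formula and Deligne's bounds on Frobenius eigenvalues is also a legitimate (and more structural) approach. You are right to flag that, as literally stated in the paper, the inequality requires $X$ to be geometrically irreducible; otherwise the main term should be $c\,p^{\dim X}$ with $c$ the number of top--dimensional geometric components, and the bound $|\,|X(\mathbb{F}_p)|-p^{\dim X}\,|\le B\,p^{\dim X-1/2}$ fails in general.
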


We notice that $\biggl| |X({\mathbb F}_{p})| - p^{{\rm dim}(X)} \biggl| \leq B p^{{\rm dim}(X)-\frac{1}{2}}$ can be written as
\[
\biggl| \frac{|X({\mathbb F}_{p})|}{p^{{\rm dim}(X)}} - 1 \biggl| \leq \frac{B}{\sqrt{p}}.
\]

Let $A(p)$ be a numerical sequence satisfying
\[
\lim_{p \to \infty} \frac{A(p)}{p^{d}}=1
\]
with $d \in {\mathbb Z}_{\geq 0}$. Then we define 
\[
b(p)=\sqrt{p} \biggl(\frac{A(p)}{p^{d}}-1\biggl),
\]
that is, 
\[
\frac{A(p)}{p^{d}}=1+\frac{b(p)}{\sqrt{p}}.
\]

We notice that by Theorem C $b(p)$ is finite ($|b(p)| \leq B$) if $b(p)=b_{X}(p)$ with $A(p)=|X({\mathbb F}_{p})|$. 

\begin{theorem}[${\mathbb P}^{n}$] Let $X={\mathbb P}^{n}$. Then
\[
b_{X}(p)=\frac{1}{\sqrt{p}} \frac{1-p^{-n}}{1-p^{-1}} (>0).
\]
\end{theorem}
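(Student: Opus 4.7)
The plan is to begin from the well-known point count $|\mathbb{P}^{n}(\mathbb{F}_{p})| = 1 + p + p^{2} + \cdots + p^{n} = \frac{p^{n+1}-1}{p-1}$, which follows from the usual stratification of $\mathbb{P}^{n}$ into affine cells $\mathbb{A}^{0} \sqcup \mathbb{A}^{1} \sqcup \cdots \sqcup \mathbb{A}^{n}$. Taking $d = \dim(\mathbb{P}^{n}) = n$ and $A(p) = |\mathbb{P}^{n}(\mathbb{F}_{p})|$, dividing through by $p^{n}$ gives
\[
\frac{A(p)}{p^{n}} = 1 + p^{-1} + p^{-2} + \cdots + p^{-n},
\]
so the quantity $\frac{A(p)}{p^{n}} - 1$ is simply the finite geometric tail $p^{-1} + p^{-2} + \cdots + p^{-n}$.

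From here I would factor out $p^{-1}$ and sum the geometric series, obtaining
\[
\frac{A(p)}{p^{n}} - 1 = p^{-1} \cdot \frac{1-p^{-n}}{1-p^{-1}}.
\]
Multiplying by $\sqrt{p}$ according to the definition $b_{X}(p) = \sqrt{p}\bigl(\frac{A(p)}{p^{d}} - 1\bigr)$ then yields
\[
b_{X}(p) = \frac{1}{\sqrt{p}} \cdot \frac{1-p^{-n}}{1-p^{-1}},
\]
which is the claimed identity. Positivity is immediate since $0 < p^{-n} < p^{-1} < 1$ makes both the numerator and denominator of the last fraction positive. There is no real obstacle: the whole argument is a one-line geometric series computation, and the only thing worth flagging is that, consistently with Theorem~C, the bound $|b_{X}(p)| \leq B$ is visible from $\frac{1-p^{-n}}{1-p^{-1}} \leq \frac{1}{1-p^{-1}} \leq 2$ for $p \geq 2$, so $b_{X}(p) \leq 2/\sqrt{p}$ tends to $0$.
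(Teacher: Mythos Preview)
Your argument is correct and matches the paper's proof essentially line for line: both start from the point count $|\mathbb{P}^{n}(\mathbb{F}_{p})| = 1 + p + \cdots + p^{n}$, divide by $p^{n}$, isolate the geometric tail $p^{-1}+\cdots+p^{-n}$, and multiply by $\sqrt{p}$ to obtain $\frac{1}{\sqrt{p}}\cdot\frac{1-p^{-n}}{1-p^{-1}}$. Your added remarks on positivity and boundedness are fine extras not in the paper but entirely consistent with it.
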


\begin{theorem}[${\mathbb A}^{n}$] Let $X={\mathbb A}^{n}$. Then
\[
b_{X}(p)=0.
\]
\end{theorem}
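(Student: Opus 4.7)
The plan is essentially a one-line direct computation, together with a conceptual remark about why the statement is trivial for $\mathbb{A}^n$. First I would unfold the definitions: an $\mathbb{F}_p$-rational point of $\mathbb{A}^n$ is simply an $n$-tuple $(a_1,\dots,a_n)\in\mathbb{F}_p^n$, so
\[
|\mathbb{A}^n(\mathbb{F}_p)| = p^n,
\]
and since $\dim(\mathbb{A}^n)=n$, we have $A(p)=p^n$ and $d=n$ in the notation preceding Theorem 8.

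Next I would substitute directly into the defining formula for $b_X$:
\[
b_X(p) \;=\; \sqrt{p}\left(\frac{|\mathbb{A}^n(\mathbb{F}_p)|}{p^n}-1\right) \;=\; \sqrt{p}\,(1-1) \;=\; 0.
\]
This holds for every prime $p$, with no asymptotics required.

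There is no real obstacle; the content of the theorem is that $\mathbb{A}^n$ is the extremal case of Serre's estimate in which one may take the constant $B=0$, reflecting the fact that the point count of $\mathbb{A}^n$ is an exact polynomial in $p$ with no lower-order Frobenius correction. The only conceptual point worth flagging, should the exposition demand it, is that the dimension appearing in Theorem C agrees with the algebraic-geometric dimension $n$ of affine $n$-space, so that the normalization $A(p)/p^d$ is genuinely $1$ rather than $1$ plus a non-trivial remainder.
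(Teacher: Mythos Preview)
Your proof is correct and takes essentially the same approach as the paper: both compute $|\mathbb{A}^n(\mathbb{F}_p)|=p^n$ and substitute directly into the definition of $b_X(p)$ to obtain $\sqrt{p}\,(p^n/p^n-1)=0$. Your additional remarks about $\mathbb{A}^n$ being the extremal case of Serre's estimate are fine commentary but not part of the argument itself.
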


\begin{theorem}[${\rm GL}(1)$] Let $X={\rm GL}(1)$. Then
\[
b_{X}(p)=-\frac{1}{\sqrt{p}} (<0)
\]
\end{theorem}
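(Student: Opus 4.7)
The plan is to compute $b_X(p)$ directly from the definition, using the basic fact that $|\mathrm{GL}(1,\mathbb{F}_p)| = p - 1$, which was already recorded in the proof of Theorem 0(1). Since $X = \mathrm{GL}(1)$ has dimension $d = 1$, we take $A(p) = |X(\mathbb{F}_p)| = p - 1$ and set $A(p)/p^d = (p-1)/p = 1 - p^{-1}$.

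Substituting into the definition $b_X(p) = \sqrt{p}\bigl(A(p)/p^d - 1\bigr)$, I would get
\[
b_X(p) = \sqrt{p}\,\bigl((1 - p^{-1}) - 1\bigr) = \sqrt{p}\cdot(-p^{-1}) = -\frac{1}{\sqrt{p}},
\]
which is the desired formula. The sign is clearly negative, matching the assertion $b_X(p) < 0$. As a consistency check, one sees that $|b_X(p)| = 1/\sqrt{p} \to 0$, which is compatible with Theorem C (one may take $B = 1$ here).

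There is no real obstacle: the only nontrivial input is the group-order computation $|\mathrm{GL}(1,\mathbb{F}_p)| = p - 1$, which is elementary (the units of the field $\mathbb{F}_p$) and already invoked earlier in the paper. The proof is therefore a one-line calculation.
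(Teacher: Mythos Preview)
Your proof is correct and follows exactly the same approach as the paper: use $|\mathrm{GL}(1,\mathbb{F}_p)|=p-1$, divide by $p$, and solve for $b_X(p)$ from the defining relation. The only difference is that you add a harmless consistency check against Theorem~C, which the paper omits.
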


\begin{theorem}[${\rm GL}(2)$] Let $X={\rm GL}(2)$. Then
\[
b_{X}(p)=-\frac{1}{\sqrt{p}} -\frac{1}{p\sqrt{p}} +\frac{1}{p^{2}\sqrt{p}} (<0)
\]
\end{theorem}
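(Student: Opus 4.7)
The plan is to compute $|{\rm GL}(2,{\mathbb F}_{p})|$ explicitly, normalize by $p^{{\rm dim}({\rm GL}(2))}=p^{4}$, and then expand. Since $\dim {\rm GL}(2)=4$, the assertion to verify is
\[
\sqrt{p}\biggl(\frac{|{\rm GL}(2,{\mathbb F}_{p})|}{p^{4}}-1\biggl) = -\frac{1}{\sqrt{p}} -\frac{1}{p\sqrt{p}} +\frac{1}{p^{2}\sqrt{p}}.
\]

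First I would recall the standard count of $|{\rm GL}(2,{\mathbb F}_{p})|$ as the number of ordered bases of ${\mathbb F}_{p}^{2}$: the first column can be any nonzero vector ($p^{2}-1$ choices) and the second can be any vector not in the span of the first ($p^{2}-p$ choices). Hence
\[
|{\rm GL}(2,{\mathbb F}_{p})| = (p^{2}-1)(p^{2}-p).
\]

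Next, I divide by $p^{4}$ and factor:
\[
\frac{|{\rm GL}(2,{\mathbb F}_{p})|}{p^{4}} = \biggl(1-\frac{1}{p^{2}}\biggl)\biggl(1-\frac{1}{p}\biggl) = 1 - \frac{1}{p} - \frac{1}{p^{2}} + \frac{1}{p^{3}}.
\]
Subtracting $1$ gives the fluctuation, and multiplying by $\sqrt{p}$ yields exactly the stated formula. The positivity of $\frac{1}{p^{2}\sqrt{p}}$ against the two negative terms is easily checked to leave $b_{X}(p)<0$ for every prime $p$ (each successive term is smaller by a factor $p$), confirming the parenthetical sign claim.

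There is no real obstacle here: the entire argument is a one-line computation once the formula for $|{\rm GL}(2,{\mathbb F}_{p})|$ is in place. The only thing worth being careful about is matching $\dim {\rm GL}(2)=4$ to the exponent $p^{4}$ in the denominator of the definition of $b_{X}(p)$.
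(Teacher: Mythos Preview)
Your proof is correct and follows essentially the same approach as the paper: compute $|{\rm GL}(2,{\mathbb F}_{p})|=p^{4}(1-p^{-1})(1-p^{-2})$, expand, subtract $1$, and multiply by $\sqrt{p}$. You add a brief justification of the order formula and of the sign claim, neither of which the paper spells out, but the core computation is identical.
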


\begin{theorem}[${\rm SL}(2)$] Let $X={\rm SL}(2)$. Then
\[
b_{X}(p)=-\frac{1}{p\sqrt{p}} (<0)
\]
\end{theorem}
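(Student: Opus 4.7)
The proof is a direct computation, and the only substantive ingredient is the standard point count for $\mathrm{SL}(2,\mathbb{F}_p)$. The plan is as follows.

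First I would establish that $|\mathrm{SL}(2,\mathbb{F}_p)| = p^3 - p$. The quickest route is via the determinant map $\det \colon \mathrm{GL}(2,\mathbb{F}_p) \to \mathbb{F}_p^{\times}$, which is surjective with fibers all of size $|\mathrm{SL}(2,\mathbb{F}_p)|$, together with the well-known count $|\mathrm{GL}(2,\mathbb{F}_p)| = (p^2-1)(p^2-p)$ obtained by choosing an ordered basis of $\mathbb{F}_p^{2}$. Dividing by $p-1$ yields
\[
|\mathrm{SL}(2,\mathbb{F}_p)| = (p^2-1)p = p^3 - p.
\]

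Second, I would note that $\dim \mathrm{SL}(2) = 3$, so that with $d = 3$ and $A(p) = |\mathrm{SL}(2,\mathbb{F}_p)|$ we have
\[
\frac{A(p)}{p^{d}} = \frac{p^3 - p}{p^3} = 1 - \frac{1}{p^{2}}.
\]

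Third, I would substitute into the definition
\[
b_{X}(p) = \sqrt{p}\left(\frac{A(p)}{p^{d}} - 1\right)
\]
to obtain $b_{X}(p) = -\sqrt{p}\cdot p^{-2} = -p^{-3/2} = -\frac{1}{p\sqrt{p}}$, which is visibly negative, confirming the stated inequality. There is no real obstacle here: the whole argument is a one-line consequence of the Bruhat/basis count for $\mathrm{SL}(2,\mathbb{F}_p)$ and the defining formula for $b_X(p)$, so the only thing to be careful about is the bookkeeping of the exponent of $p$ ($\dim \mathrm{SL}(2) = 3$, not $4$).
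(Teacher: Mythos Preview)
Your proof is correct and follows essentially the same route as the paper: both use $|\mathrm{SL}(2,\mathbb{F}_p)| = p^{3}(1-p^{-2})$ with $\dim \mathrm{SL}(2)=3$ and plug directly into the definition of $b_{X}(p)$. Your extra justification of the point count via $|\mathrm{GL}(2,\mathbb{F}_p)|/(p-1)$ is a welcome detail but not a different method.
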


The following theorem gives an example where $b(p)$ is not necessarily finite. 

\begin{theorem} Let $A(p)=p^{d}+p^{d-\frac{1}{3}}$. Then $b(p)$ is not finite.
\end{theorem}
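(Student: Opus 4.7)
The plan is to do the direct computation required by the definition of $b(p)$ and observe that the exponent $-1/3$ is larger than $-1/2$, so $\sqrt{p}$ fails to tame it.

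First I would verify the hypothesis needed to apply the definition: with $A(p)=p^{d}+p^{d-\frac{1}{3}}$ we have
\[
\frac{A(p)}{p^{d}}=1+p^{-\frac{1}{3}},
\]
so $\lim_{p\to\infty}A(p)/p^{d}=1$, and hence $b(p)$ is defined. Then I would substitute into the definition to obtain
\[
b(p)=\sqrt{p}\left(\frac{A(p)}{p^{d}}-1\right)=\sqrt{p}\cdot p^{-\frac{1}{3}}=p^{\frac{1}{6}}.
\]

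Finally I would note that $p^{1/6}\to\infty$ as $p\to\infty$, so $b(p)$ is not bounded and hence not finite. In particular, $A(p)$ cannot arise as $|X(\mathbb{F}_{p})|$ for an algebraic variety $X$ with $\dim(X)=d$ in the sense of Theorem C, since Theorem C would force $|b(p)|\leq B$.

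There is no real obstacle here; the content of the statement is simply that the exponent $\frac{1}{3}$ lies strictly between $0$ and $\frac{1}{2}$, so multiplying the error by $\sqrt{p}$ is not enough to keep it bounded. The only thing worth emphasizing in the write-up is the contrast with Theorem C, where the Lang--Weil--Serre bound guarantees an error of size $O(p^{d-\frac{1}{2}})$ rather than $O(p^{d-\frac{1}{3}})$.
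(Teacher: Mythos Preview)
Your proof is correct and matches the paper's own argument essentially line for line: both compute $b(p)=\sqrt{p}\,(A(p)/p^{d}-1)=p^{1/6}$ and conclude that this tends to infinity. Your added remarks (checking $A(p)/p^{d}\to 1$ and contrasting with Theorem~C) are helpful context but not additional mathematical content beyond what the paper does.
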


Finally, we calculate $b_{X}(p)$ for elliptic curve $X$ over ${\mathbb Q}$ with $A(p) = |X({\mathbb F}_{p})|$.

\begin{theorem} For sufficiently large $p$ ($p$ is ``good'') we have
\[
-2 < b_{X}(p)<3.
\]
\end{theorem}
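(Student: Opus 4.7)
The plan is to reduce the statement to the Hasse bound. For any prime $p$ of good reduction for the elliptic curve $X$, I would write
\[
|X(\mathbb{F}_{p})| = p + 1 - a_{p},
\]
where $a_{p} \in \mathbb{Z}$ is the trace of Frobenius. Substituting into the definition with $d = \dim(X) = 1$ gives
\[
b_{X}(p) = \sqrt{p}\left(\frac{p+1-a_{p}}{p}-1\right) = \frac{1}{\sqrt{p}} - \frac{a_{p}}{\sqrt{p}}.
\]

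Next I would invoke Hasse's theorem, which states $|a_{p}| \leq 2\sqrt{p}$. Since $a_{p}$ is an integer and $2\sqrt{p}$ is irrational for every prime $p$, the inequality is in fact strict: $|a_{p}| < 2\sqrt{p}$. Therefore
\[
-2 < -\frac{a_{p}}{\sqrt{p}} < 2.
\]
Adding the term $\frac{1}{\sqrt{p}}$, which is positive, yields the lower bound
\[
b_{X}(p) > -2 + \frac{1}{\sqrt{p}} > -2.
\]
For the upper bound, observe that for every prime $p \geq 2$ we have $\frac{1}{\sqrt{p}} \leq \frac{1}{\sqrt{2}} < 1$, so
\[
b_{X}(p) < 2 + \frac{1}{\sqrt{p}} < 3.
\]
Combining both inequalities gives the claim.

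I do not anticipate a genuine obstacle: once one accepts Hasse's bound (which applies at every prime of good reduction, hence the ``good $p$'' hypothesis in the statement), the estimate is a one-line substitution together with the trivial inequality $0 < \frac{1}{\sqrt{p}} < 1$. The only subtlety worth flagging is the strictness of the lower bound, which relies on the arithmetic observation that the integer $a_{p}$ cannot equal $\pm 2\sqrt{p}$. One might also remark that the upper bound $3$ is essentially sharp for small good primes while for $p \to \infty$ the effective bound improves to $2 + o(1)$.
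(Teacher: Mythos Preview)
Your proof is correct and essentially identical to the paper's: the paper writes $a(p)=2\sqrt{p}\cos\theta(p)$ and obtains $b_X(p)=\tfrac{1}{\sqrt{p}}-2\cos\theta(p)$, then bounds exactly as you do. Your integrality argument for strictness is an extra flourish the paper does not need, since (as the paper observes) the positive term $\tfrac{1}{\sqrt{p}}$ already forces $b_X(p)>-2$ from the non-strict Hasse bound.
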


\section{Proof of Main results} 

\begin{proof}[Proof of Theorem 1] 
Using 
\[
|{\rm GL}(1,{\mathbb F}_{p})|=p-1
\]
and Theorem A, we have
\begin{align*}
||{\rm GL}(1)||_{t} &= \prod_{p \leq t} \frac{|{\rm GL}(1,{\mathbb F}_{p})|}{p}\\
&= \prod_{p \leq t} \{(1-p^{-1})\}\\
&\sim e^{-\gamma} \cdot (\log t)^{-1} \quad (t \to \infty).
\end{align*}
Let $n \geq 2$. Using 
\[
|{\rm GL}(n,{\mathbb F}_{p})|=p^{n^{2}}(1-p^{-1})(1-p^{-2}) \cdots (1-p^{-n})
\]
and Theorem A, we have
\begin{align*}
||{\rm GL}(n)||_{t} &= \prod_{p \leq t} \frac{|{\rm GL}(n,{\mathbb F}_{p})|}{p^{n^2}}\\
&= \prod_{p \leq t} \{(1-p^{-1})(1-p^{-2}) \cdots (1-p^{-n})\}\\
&= \prod_{p \leq t} (1-p^{-1}) \prod_{p \leq t} \{(1-p^{-2}) \cdots (1-p^{-n})\}\\
&\sim e^{-\gamma} \prod_{k=2}^{n} \zeta(k)^{-1} \cdot (\log t)^{-1} \quad (t \to \infty).
\end{align*}
\end{proof}

\begin{proof}[Proof of Theorem 2] 
Using 
\begin{align*}
|{\rm SL}(n,{\mathbb F}_{p})| &= \frac{|{\rm GL}(n,{\mathbb F}_{p})|}{p-1} \\
&=\frac{p^{n^{2}}(1-p^{-1})(1-p^{-2}) \cdots (1-p^{-n})}{p-1}\\
&=p^{n^2 - 1} (1-p^{-2}) \cdots (1-p^{-n}),
\end{align*}
we have
\begin{align*}
||{\rm SL}(n)||_{t} &= \prod_{p \leq t} \frac{|{\rm SL}(n,{\mathbb F}_{p})|}{p^{n^2 - 1}}\\
&= \prod_{p \leq t} \{(1-p^{-2}) \cdots (1-p^{-n})\}\\
&\sim \prod_{k=2}^{n} \zeta(k)^{-1} \quad (t \to \infty).
\end{align*}
\end{proof}

\begin{proof}[Proof of Theorem 3]
Using 
\[
|{\rm Sp}(n,{\mathbb F}_{p})|=p^{n(2n+1)} (1-p^{-2})(1-p^{-4}) \cdots (1-p^{-2n}),
\]
we have
\begin{align*}
||{\rm Sp}(n)||_{t} &= \prod_{p \leq t} \frac{|{\rm Sp}(n,{\mathbb F}_{p})|}{p^{n(2n+1)}}\\
&= \prod_{p \leq t} \{(1-p^{-2})(1-p^{-4}) \cdots (1-p^{-2n})\}\\
&\sim \prod_{k=1}^{n} \zeta(2k)^{-1} \quad (t \to \infty).
\end{align*}
\end{proof}

\begin{proof}[Proof of Theorem 4]
Using 
\[
{\mathbb A}^{n}({\mathbb F}_{p})=({\mathbb F}_{p})^{n},
\]
we have
\begin{align*}
||{\mathbb A}^{n}||_{t} &= \prod_{p \leq t} \frac{|{\mathbb A}^{n}({\mathbb F}_{p})|}{p^{n}}\\
&= 1\\
&\sim 1 \quad (t \to \infty).
\end{align*}
\end{proof}

\begin{proof}[Proof of Theorem 5]
Using 
\[
|{\mathbb P}^{n}({\mathbb F}_{p})|=1+p+\cdots+p^{n}=\frac{p^{n+1}-1}{p-1}
\]
and Theorem A, we have
\begin{align*}
||{\mathbb P}^{n}||_{t} &= \prod_{p \leq t} \frac{|{\mathbb P}^{n}({\mathbb F}_{p})|}{p^{n}}\\
&= \prod_{p \leq t} \frac{1-p^{-(n+1)}}{1-p^{-1}}\\
&\sim \frac{e^{\gamma}}{\zeta(n+1)} \cdot \log t \quad (t \to \infty).
\end{align*}
\end{proof}

\begin{proof}[Proof of Theorem 6]
Using 
\[
|{\rm Gr}(n,m)({\mathbb F}_{p})|=\frac{(p^{n}-1)\cdots(p^{n-m+1}-1)}{(p^{m}-1)\cdots(p-1)}
\]
and Theorem A, we have
\begin{align*}
||{\rm Gr}(n,m)||_{t} &= \prod_{p \leq t} \frac{|{\rm Gr}(n,m)({\mathbb F}_{p})|}{p^{m(n-m)}}\\
&= \prod_{p \leq t} \frac{(1-p^{-(n-m+1)})\cdots(1-p^{-n})}{(1-p^{-1})\cdots(1-p^{-m})}\\
&\sim e^{\gamma}\frac{\zeta(2) \cdots \zeta(m)}{\zeta(n-m+1) \cdots \zeta(n)} \cdot \log t \quad (t \to \infty).
\end{align*}
\end{proof}

\begin{proof}[Proof of Theorem 7]
Using
\[
\Phi_{n}(t)=\prod_{d|n}(t^{d}-1)^{\mu(\frac{n}{d})}
\]
and Theorem A, we have
\begin{align*}
||\Phi_{n}||_{t} &=\prod_{p \leq t} \frac{\Phi_{n}(p)}{p^{{\rm deg}(\Phi_{n})}}\\
&= \prod_{p \leq t}\frac{\prod_{d|n}(p^{d}-1)^{\mu(\frac{n}{d})}}{p^{\varphi(n)}}\\
&= \prod_{p \leq t}\frac{\prod_{d|n}(p^{d}-1)^{\mu(\frac{n}{d})}}{p^{\sum_{d|n}\mu(\frac{n}{d}) d}}\\
&= \prod_{p \leq t} \prod_{d|n} (1-p^{-d})^{\mu(\frac{n}{d})}\\
&= \prod_{p \leq t} (1-p^{-1})^{\mu(n)} \prod_{d|n \atop d>1} (1-p^{-d})^{\mu(\frac{n}{d})} \\
&\sim (e^{-\gamma} (\log t)^{-1})^{\mu(n)} \prod_{d|n \atop d>1} \zeta(d)^{-\mu(\frac{n}{d})} \quad (t \to \infty)\\
&= e^{-\gamma \mu(n)} \prod_{d|n \atop d>1} \zeta(d)^{-\mu(\frac{n}{d})} \cdot (\log t)^{-\mu(n)}.
\end{align*}

\end{proof}

\begin{proof}[Proof of Theorem 8] Since
\[
|X({\mathbb F}_{p})|=p^{n}+p^{n-1}+\cdots+1,
\]
we have
\begin{align*}
\frac{|X({\mathbb F}_{p})|}{p^{n}} &= 1+\frac{1}{p}+\cdots+\frac{1}{p^{n}}\\
&= 1+\frac{b_{X}(p)}{\sqrt{p}}.
\end{align*}
Thus we have
\begin{align*}
b_{X}(p) &= \sqrt{p} (\frac{1}{p}+\frac{1}{p^2}+\cdots+\frac{1}{p^{n}})\\
&= \frac{1}{\sqrt{p}} (1+\frac{1}{p}+\cdots+\frac{1}{p^{n-1}})\\
&= \frac{1}{\sqrt{p}} \frac{1-p^{-n}}{1-p^{-1}}.
\end{align*}

\end{proof}

\begin{proof}[Proof of Theorem 9] Since
\[
|X({\mathbb F}_{p})|=p^{n},
\]
we have
\begin{align*}
b_{X}(p) &= \sqrt{p} (\frac{|X({\mathbb F}_{p})|}{p^{n}} - 1)\\
&= 0.
\end{align*}
\end{proof}

\begin{proof}[Proof of Theorem 10] Since
\[
|X({\mathbb F}_{p})|=p-1,
\]
we have
\begin{align*}
\frac{|X({\mathbb F}_{p})|}{p} &= \frac{p-1}{p}\\
&= 1-\frac{1}{p}\\
&= 1+\frac{b_{X}(p)}{\sqrt{p}}.
\end{align*}
Thus we have
\[
b_{X}(p) = -\frac{1}{\sqrt{p}}. 
\]
\end{proof}

\begin{proof}[Proof of Theorem 11] Since
\[
|X({\mathbb F}_{p})|=p^{4}(1-p^{-1})(1-p^{-2}),
\]
we have
\begin{align*}
\frac{|X({\mathbb F}_{p})|}{p^{4}} &= (1-p^{-1})(1-p^{-2})\\
&=1-p^{-1}-p^{-2}+p^{-3}\\
&= 1+\frac{b_{X}(p)}{\sqrt{p}}.
\end{align*}
Thus we have
\[
b_{X}(p) =-\frac{1}{\sqrt{p}}-\frac{1}{p\sqrt{p}}+\frac{1}{p^{2}\sqrt{p}}.
\]
\end{proof}

\begin{proof}[Proof of Theorem 12] Since
\[
|X({\mathbb F}_{p})|=p^{3}(1-p^{-2}),
\]
we have
\begin{align*}
\frac{|X({\mathbb F}_{p})|}{p^{3}} &= 1-p^{-2}\\
&=1+\frac{b_{X}(p)}{\sqrt{p}}.
\end{align*}
Thus we have
\[
b_{X}(p) =-\frac{1}{p\sqrt{p}}.
\]
\end{proof}

\begin{proof}[Proof of Theorem 13] Since
\begin{align*}
b(p) &= \sqrt{p} (\frac{A(p)}{p^{d}}-1)\\
&=\sqrt{p} (\frac{p^{d}+p^{d-\frac{1}{3}}}{p^{d}}-1)\\
&=p^{\frac{1}{6}},
\end{align*}
we have
\[
\lim_{p \to \infty} b(p) = \infty.
\]
\end{proof}

\begin{proof}[Proof of Theorem 14] For
\[
A(p) = |X({\mathbb F}_{p})| = p+1-a(p)
\]
using Hasse's theorem on elliptic curves we can write
\[
a(p) = 2 \sqrt{p} \cos(\theta(p))
\]
with $\theta(p) \in [0,\pi]$. So we obtain
\begin{align*}
b_{X}(p) &= \sqrt{p} (\frac{A(p)}{p}-1)\\
&=\frac{1}{\sqrt{p}} - 2\cos(\theta(p)).
\end{align*}
Since $-2 \leq 2\cos(\theta(p)) \leq 2$, we have
\begin{align*}
&b_{X}(p) \leq \frac{1}{\sqrt{2}}+2<3,\\
&b_{X}(p) > -2\cos(\theta(p)) \geq -2.
\end{align*}
\end{proof}

\end{document}